\newtheorem{thm}{Theorem}[section]
\newtheorem{coro}[thm]{Corollary}
\newtheorem{prop}[thm]{Proposition}
\newtheorem{defn}[thm]{Definition}
\newtheorem{rem}[thm]{Remark}
\DeclareRobustCommand{\coprod}{\mathop{\text{\fakecoprod}}}
\newcommand{\fakecoprod}{\sbox0{$\prod$}%
\smash{\raisebox{\dimexpr.9625\depth-\dp0}{\scalebox{1}[-1]{$\prod$}}}%
\vphantom{$\prod$}}
\DeclareMathOperator\Ho{Ho}
\DeclareMathOperator\Ind{Ind}
\DeclareMathOperator\colim{colim}
\newcommand\Set{{\mathbf{Set}}}
\newcommand\op{{\mathrm{op}}}
\newcommand\id{{\mathrm{id}}}
\newcommand\ca{\mathcal {A}}
\newcommand\cc{\mathcal {C}}
\newcommand\cd{\mathcal {D}}
\newcommand\ci{\mathcal {I}}
\newcommand\ck{\mathcal {K}}
\newcommand\cm{\mathcal {M}}
\begin{document}

\title*{Combinatorial homotopy categories}
\author{Carles Casacuberta and Ji\v{r}\'{\i} Rosick\'{y}}
\institute{Carles Casacuberta \at Institut de Matem\`atica, Universitat de Barcelona (UB), Gran Via de les Corts Catalanes 585, 08007 Barcelona, Spain, \email{carles.casacuberta@ub.edu} \and Ji\v{r}\'{\i} Rosick\'{y} \at Department of Mathematics and Statistics, Masaryk University, Faculty of Sciences, Kotl\'{a}\v{r}sk\'{a} 2, 611 37 Brno, Czech Republic, \email{rosicky@math.muni.cz}}

\maketitle

\abstract{A model category is called combinatorial if it is cofibrantly generated and its underlying category is locally presentable. As shown in recent years, homotopy categories of combinatorial model categories share useful properties, such as being well generated and satisfying a very general form of Ohkawa's theorem.}

\section{Introduction}
\label{sec:1}

The term ``combinatorial'' in topology classically refers to discrete methods or, more specifically, to the use of polyhedra, simplicial complexes or cell complexes in order to deal with topological problems \cite{Kan,W}.

In the context of Quillen model categories in homotopy theory \cite{Quillen}, those called \emph{combinatorial} are, by definition, the cofibrantly generated ones whose underlying category is locally presentable. For example, simplicial sets are combinatorial, but topological spaces are not. As a consequence of this fact, certain constructions involving homotopy colimits, such as Bousfield localizations, may seem intricate if one works with topological spaces while they have become standard technology in the presence of combinatorial models \cite{Barwick, Bousfield, Hirschhorn}.

One key feature of combinatorial model categories is that they admit presentations in terms of generators and relations; in fact, as shown by Dugger in \cite{Dugger}, they are Quillen equivalent to localizations of categories of simplicial presheaves with respect to sets of maps. Moreover, for each combinatorial model category $\ck$ there exist %sufficiently large 
cardinals $\lambda$ for which $\ck$ admits fibrant and cofibrant replacement functors that preserve $\lambda$\nobreakdash-fil\-tered colimits and $\lambda$-presentable objects, and the class of weak equivalences is closed under $\lambda$-filtered colimits \cite{Beke, Dugger, R2}.

Cofibrantly generated model categories admit weak generators \cite{Hovey, Raptis}. Combinatorial model categories are, in addition, well generated in the sense of %Neeman 
\cite{Krause, Neeman}. This fact links the study of combinatorial model categories with the theory of triangulated categories in useful ways. For instance, it was shown in \cite{CGR1} that localizing subcategories of triangulated categories with combinatorial models are coreflective assuming a large-cardinal axiom (Vop\v{e}nka's principle), and similarly colocalizing subcategories are reflective. 

In this article we show that a suitably restricted Yoneda embedding \cite{AR, R2} gives a way to implement Ohkawa's argument \cite{Ohkawa} in the homotopy category of any combinatorial model category, not necessarily stable. Ohkawa's original theorem becomes then a special case, since the homotopy category of spectra admits combinatorial models \cite{HSS}. Thus we prove that, if $\ck$ is a pointed strongly $\lambda$-combinatorial model category (see Section~\ref{sec:2} below for details) then there is only a set of distinct kernels of endofunctors $H\colon\ck\to\ck$ preserving $\lambda$-filtered colimits and the zero object. 

This statement (and our method of proof) is a variant of the main result in~\cite{CGR2}, where Ohkawa's theorem was broadly generalized. In independent work, Stevenson used abelian presheaves over compact objects to prove that Ohkawa's theorem holds in compactly generated tensor triangulated categories \cite{Stevenson}, and Iyengar and Krause extended this result to well generated tensor triangulated categories~\cite{IK}.

%\vspace*{-0.2cm}

\begin{acknowledgement} 
This article has been written as a contribution to the proceedings of a memorial conference for Professor Tetsusuke Ohkawa held at the University of Nagoya in 2015. The content of Section~\ref{sec:4} is based on previous joint work of the authors with Javier Guti\'errez published in \cite{CGR2}. We also appreciate useful discussions with George Raptis.
The authors were supported by the Grant Agency of the Czech Republic under grant P201/12/G028, the Agency for Management of University and Research Grants of Catalonia under project 2014\;SGR\,114, and the Spanish Ministry of Economy and Competitiveness under grant MTM2013-42178-P.
\end{acknowledgement}

%\vspace*{-0.5cm}

\section{Combinatorial model categories}
\label{sec:2}

Recall from \cite{Hovey, Quillen} that if $\ck$ is a model category then its homotopy category $\Ho\ck$ can be defined as the quotient of the full subcategory $\ck_{cf}$ consisting of objects that are fibrant and cofibrant by the homotopy relation on morphisms.
Each choice of a fibrant replacement functor $R_f$ and a cofibrant replacement functor $R_c$ on $\ck$ yields an essentially surjective functor 
\begin{equation}
\label{P}
P\colon\ck\longrightarrow\Ho\ck,
\end{equation}
namely the composite $R_cR_f\colon\ck\to\ck_{cf}$ followed by the 
%canonical 
projection $\ck_{cf}\to\Ho\ck$. 
%The \emph{canonical functor} $P\colon\ck\to\Ho\ck$ is the composition $R_cR_f$ of fibrant and cofibrant replacement functors on objects. This canonical functor $P$ is full and essentially surjective. 
%The category $\Ho\ck$ has products and coproducts for every model category $\ck$ 
%and the functor $P\colon\ck\to\Ho\ck$ preserves them. Thus if $\ck$ has an initial object or a terminal object then so does $\Ho\ck$. 

A model category is called \textit{combinatorial} if it is locally presentable and cofibrantly generated ---the definitions of these terms can be found in \cite{AR, Dugger, Hirschhorn, Hovey}. By a \textit{combinatorial homotopy category} we mean a homotopy category of a combinatorial model category. 

Every locally presentable category $\cc$ can be viewed as a combinatorial homotopy category because  the trivial model structure on $\cc$ (that is, the one in which every morphism is both a cofibration and a fibration, and the weak equivalences are the isomorphisms) is cofibrantly generated by the argument given in \cite[Example~4.6]{R1}. 
In general, combinatorial homotopy categories are far from being locally presentable themselves, but they behave in some sense like a homotopy-theoretical version of those. 

A model category $\ck$ is called \emph{$\lambda$-combinatorial} for a regular cardinal $\lambda$ if it is locally $\lambda$-presentable and cofibrantly generated by morphisms between $\lambda$-presentable objects. Then the functors giving factorizations of morphisms in $\ck$ into cofibrations followed by trivial fibrations and into trivial cofibrations followed by fibrations can be chosen to be $\lambda$-accessible, that is, preserving $\lambda$-filtered colimits. Details are given in \cite[Proposition~3.1]{R2}. 
%In fact every combinatorial model category is $\lambda$-combinatorial for arbitrarily large regular cardinals~$\lambda$.
%
%\cite[Proposition~7.1]{Dugger}.
%
%We say that $\ck$ is \emph{strongly $\lambda$\nobreakdash-com\-bin\-atorial} if, in addition, the factorization functors can be chosen so that they preserve $\lambda$-presentable objects. Thus in a strongly $\lambda$-combinatorial model category there are fibrant and cofibrant replacement functors $R_f$ and $R_c$ that preserve $\lambda$-filtered colimits and $\lambda$\nobreakdash-pres\-entable objects.
%
%Every combinatorial model category is strongly $\lambda$-combinatorial for arbitrarily large regular cardinals~$\lambda$; see \cite[Proposition~7.2]{Dugger} and \cite[Proposition~3.5]{R2}. 

\section{Restricted Yoneda embedding}
\label{sec:3}

Let $\cc$ be a category and $\ca$ a small full subcategory of $\cc$. 
The \textit{restricted Yoneda embedding}
\[
E_\ca\colon\cc\longrightarrow\Set^{\ca^{\op}}
\]
sends every object $X$ to the hom-set $\cc(-,X)$ restricted to $\ca$. Thus $E_\ca$ is full and faithful on morphisms whose domain is an object of~$\ca$.

The subcategory $\ca$ is called a \emph{generator} of $\cc$ if $E_\ca$ is faithful, and a \emph{strong generator} if $E_\ca$ is faithful and conservative, that is, reflecting isomorphisms. We say that $\ca$ is a \textit{weak generator} if $E_\ca$ reflects isomorphisms whose codomain is the terminal object of~$\cc$. This means that an object of $\cc$ is terminal whenever its image under $E_\ca$ is terminal; hence the objects in a weak generator of $\cc$ form a \emph{left weakly adequate} set in the sense of~\cite{Raptis}.

It was shown in \cite[Theorem~7.3.1]{Hovey} that, if $\ci$ is a set of generating cofibrations in a pointed cofibrantly generated model category $\ck$, then the cofibres of morphisms in $\ci$ form a weak generator of~$\Ho\ck$. The assumption that $\ck$ be pointed can be removed if $\ck$ has a set $\ci$ of generating cofibrations between cofibrant objects, in which case the domains and codomains of morphisms in $\ci$ form a weak generator of $\Ho\ck$, as shown in \cite[Theorem~1.2]{Raptis}.

We also recall that a small full subcategory $\ca$ of a category $\cc$ is called \emph{dense} if every object $X$ in $\cc$ is a colimit of its canonical diagram with respect to~$\ca$. This is equivalent to $E_\ca$ being full and faithful; see \cite[Proposition~1.26]{AR}. Correspondingly, $E_\ca$ is full if and only if $\ca$ is \emph{weakly dense} in the sense that every object $X$
is a weak colimit of its canonical diagram with respect to~$\ca$. Finally, $E_\ca$ is full and conservative if and only if every $X$ is a minimal weak colimit of its canonical diagram with respect to~$\ca$. Recall that a weak colimit $(\delta_d\colon Dd\to X)$ of a diagram $D\colon\cd\to\cc$ is called \emph{minimal} if every morphism $f\colon X\to X$ such that $f\circ\delta_d=\delta_d$ for each $d\in\cd$ is an isomorphism \cite{Christensen}.

\begin{thm}
\label{strongly}
If $\ck$ is a combinatorial model category, then there exist arbitrarily large regular cardinals $\lambda$ such that $\ck$ has the following properties:
\begin{enumerate}
\item[{\rm 1.}] $\ck$ is locally $\lambda$-presentable.
\item[{\rm 2.}] There is a small weak generator of $\Ho\ck$ consisting of $\lambda$-presentable objects.
\item[{\rm 3.}] There are fibrant and cofibrant replacement functors $R_f$ and $R_c$ on $\ck$ that preserve $\lambda$-filtered colimits and $\lambda$\nobreakdash-pres\-entable objects. 
\end{enumerate}
\end{thm}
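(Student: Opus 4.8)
The plan is to produce an unbounded class of regular cardinals $\lambda$ for which the three conditions hold simultaneously. Since each condition will be seen to hold for $\lambda$ ranging over an unbounded class of regular cardinals, and a finite intersection of unbounded classes of regular cardinals is again unbounded, it suffices to treat the conditions separately and then intersect. To begin, I would use that $\ck$, being combinatorial, is $\mu$-combinatorial for some regular $\mu$: it is locally $\mu_0$-presentable for some $\mu_0$, and its generating cofibrations $\ci$ and generating trivial cofibrations $\cj$ have domains and codomains that are $\mu_1$-presentable for some $\mu_1$; any regular $\mu\ge\max\{\mu_0,\mu_1\}$ then makes $\ck$ locally $\mu$-presentable and cofibrantly generated by maps between $\mu$-presentable objects.

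For condition 1, I would invoke the fact that a locally $\mu$-presentable category is locally $\lambda$-presentable for every regular $\lambda\ge\mu$ \cite{AR}. Because a $\mu$-presentable object is $\lambda$-presentable for every $\lambda\ge\mu$, the same cardinals keep $\ck$ $\lambda$-combinatorial. For condition 3, I would then apply \cite[Proposition 3.1]{R2} to the $\lambda$-combinatorial category $\ck$ to obtain $\lambda$-accessible functorial factorizations; taking the codomain of the trivial cofibration in the factorization of $X\to 1$ and the domain of the trivial fibration in the factorization of $0\to X$ yields $R_f$ and $R_c$ preserving $\lambda$-filtered colimits. To upgrade this to preservation of $\lambda$-presentable objects I would use that an accessible functor between locally presentable categories preserves $\lambda$-presentable objects for $\lambda$ in an unbounded class of regular cardinals \cite{AR, R2}; intersecting this class with $\{\lambda:\lambda\ge\mu\}$ secures condition 3.

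The heart of the argument is condition 2. When $\ck$ is pointed I would take the cofibres of the morphisms of $\ci$, which form a weak generator of $\Ho\ck$ by \cite[Theorem 7.3.1]{Hovey}. Each such cofibre is the pushout of a morphism of $\ci$ along the map to the zero object, hence a $\lambda$-small colimit of $\lambda$-presentable objects and therefore $\lambda$-presentable, since $\lambda$-presentable objects are closed under $\lambda$-small colimits. This already exhibits a small weak generator of $\Ho\ck$ consisting of $\lambda$-presentable objects. In both cases, condition 3 lets me, if desired, replace the chosen generator by its image under $R_c R_f$: this keeps the objects $\lambda$-presentable and, since $X\to R_f X$ and $R_c Y\to Y$ are weak equivalences, leaves their images in $\Ho\ck$ unchanged, so the weak-generator property persists.

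In the general case I would instead appeal to \cite[Theorem 1.2]{Raptis}, whose hypothesis is a set of generating cofibrations between cofibrant objects, whose domains and codomains then form a weak generator of $\Ho\ck$ and can be arranged to be $\lambda$-presentable as above. Securing this hypothesis for a combinatorial $\ck$ is the step I expect to be the main obstacle, because cellular constructions from cofibrant-to-cofibrant cells reach only cofibrant objects, so such generators need not exist for $\ck$ itself. I would therefore replace $\ck$ by a Quillen equivalent model admitting cofibrant $\lambda$-presentable generators (as in \cite{Dugger}), apply \cite[Theorem 1.2]{Raptis} there, and transport the resulting weak generator back along the induced equivalence of homotopy categories; the delicate point is to carry out this transport while retaining $\lambda$-presentability for a single $\lambda$ serving all three conditions.
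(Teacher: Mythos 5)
Your overall route coincides with the paper's: Dugger's presentation theorem plus Raptis's theorem to produce the weak generator, and \cite[Proposition~3.1]{R2} together with the uniformization theorem \cite[Theorem~2.19]{AR} (or \cite[Proposition~7.2]{Dugger}) to get accessible replacement functors preserving $\lambda$-presentable objects. However, the step you explicitly flag as ``the main obstacle'' --- transporting the weak generator from the Quillen equivalent model $\cm$ back to $\Ho\ck$ while retaining $\lambda$-presentability --- is precisely where your argument stops short, and it is the one point where the paper's proof does something you have not done. The resolution is a reversal of the order of quantifiers: do \emph{not} try to fix $\lambda$ first and then transport. A weak generator is merely a small set of objects whose images under the restricted Yoneda embedding detect terminal objects, so it transports along the equivalence $\Ho\cm\simeq\Ho\ck$ with no size bookkeeping whatsoever, yielding a small weak generator $\ca$ of $\Ho\ck$ as a bare set of objects. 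Only \emph{afterwards} does one choose the cardinal: since $\ck$ is locally presentable, each of the set-many objects of $\ca$ is $\kappa$-presentable for some $\kappa$, so there is a single regular $\mu$ making them all $\mu$-presentable, and one takes $\mu$ simultaneously large enough for local $\mu$-presentability and for the generating cofibrations, then $\lambda\ge\mu$ as in your condition~3. With this reordering the ``delicate point'' evaporates; without it your proof of condition~2 in the unpointed case is incomplete.

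Two smaller remarks. First, your blanket principle that ``a finite intersection of unbounded classes of regular cardinals is again unbounded'' is false in general (two unbounded classes of regular cardinals can be disjoint); it happens to be harmless here because one of your classes is a tail $\{\lambda\ge\mu\}$ and the preservation of $\lambda$-presentable objects by $R_f$ and $R_c$ is obtained by a \emph{single} application of \cite[Theorem~2.19]{AR} to the pair of functors, which is how the paper phrases it --- but as stated the principle should not be invoked. Second, your pointed-case argument via \cite[Theorem~7.3.1]{Hovey} and cofibres (which are $\lambda$-small colimits of $\lambda$-presentable objects, hence $\lambda$-presentable) is correct and gives a more explicit generator than the paper's uniform argument, though the paper does not separate the pointed case.
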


\begin{proof}
If $\ck$ is combinatorial, then, according to \cite[Corollary~1.2]{Dugger}, there is a zig-zag of Quillen equivalences into another combinatorial model category $\cm$ where all objects are cofibrant. Consequently, the domains and codomains of morphisms in a set of generating cofibrations for $\cm$ form a weak generator of the homotopy category $\Ho\cm$ by \cite[Theorem~1.2]{Raptis}. Since the latter is equivalent to $\Ho\ck$, it follows that $\Ho\ck$ also has a small weak generator $\ca$.

As $\ck$ is locally presentable, there are arbitrarily large regular cardinals $\mu$ such that $\ck$ is locally $\mu$-presentable, by \cite[Theorem~1.20]{AR}. Thus we can choose $\mu$ big enough so that $\ck$ is locally $\mu$-presentable and cofibrantly generated by morphisms between $\mu$-presentable objects, and, furthermore, the objects in the chosen weak generator $\ca$ are $\mu$\nobreakdash-pres\-ent\-able.
Then, as shown in the proof of \cite[Proposition~3.1]{R2}, there are $\mu$-accessible functors giving factorizations of morphisms in $\ck$ into cofibrations followed by trivial fibrations and into trivial cofibrations followed by fibrations. In particular we can pick a fibrant replacement functor $R_f$ and a cofibrant replacement functor $R_c$ that are $\mu$-accessible. Moreover, using \cite[Theorem~2.19]{AR} or \cite[Proposition~7.2]{Dugger}, we can pick a regular cardinal $\lambda\ge\mu$ such that $R_f$ and $R_c$ preserve both $\lambda$-filtered colimits and $\lambda$-presentable objects. 
\qed
\end{proof}

\begin{defn}
{\rm
We call a model category $\ck$ \emph{strongly $\lambda$-combinatorial} if it is combinatorial and $\lambda$ satisfies the conditions stated in Theorem~\ref{strongly}.
}
\end{defn}

For a regular cardinal $\lambda$, let $\ck$ be a strongly $\lambda$-combinatorial model category and denote by $\ck_\lambda$ a small full subcategory of representatives of all isomorphism classes of $\lambda$-presentable objects. Here and in what follows we assume that fibrant and cofibrant replacement functors $R_f$ and $R_c$ have been chosen on $\ck$ so that they preserve $\lambda$-filtered colimits and $\lambda$-present\-able objects.

Let $\Ho\ck_\lambda$ denote the full image of the composition
\[
%\ck_{\lambda}\hookrightarrow \ck\stackrel{P}{\longrightarrow}\Ho\ck
\xymatrix{
\ck_{\lambda} \lhook\mkern-7mu \ar[r] & \ck  \ar[r]^-{P} & \Ho\ck
}
\]  
where $P$ is $R_cR_f$ followed by projection as in \eqref{P}, and consider the restricted Yoneda embedding
\[
E_\lambda\colon\Ho\ck\longrightarrow\Set^{({\Ho\ck_\lambda})^{\op}}.
\]
Thus the composite $E_{\lambda}P$ preserves $\lambda$-presentable objects.

The next two results follow from \cite[Proposition~5.1 and Corollary~5.2]{R2}.

\begin{thm}
\label{functor}
Let $\ck$ be a strongly $\lambda$-combinatorial model category for a regular cardinal~$\lambda$. Then $E_\lambda P\colon\ck\to\Set^{({\Ho\ck_\lambda})^{\op}}$ 
preserves $\lambda$-filtered colimits.
\end{thm}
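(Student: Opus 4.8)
The plan is to reduce the statement to a pointwise computation of hom-sets in $\Ho\ck$ and then to show that homotopy classes of maps out of a $\lambda$-presentable fibrant--cofibrant object commute with $\lambda$-filtered colimits. Since colimits in $\Set^{(\Ho\ck_\lambda)^{\op}}$ are computed objectwise and the objects of $\Ho\ck_\lambda$ are exactly the images $PK$ with $K\in\ck_\lambda$, it suffices to prove that for each $\lambda$-presentable object $K$ the functor $X\mapsto\Ho\ck(PK,PX)$ sends a $\lambda$-filtered colimit $X=\colim_i X_i$ in $\ck$ to the corresponding colimit of sets, naturally in $X$.

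First I would set $A=R_cR_fK$ and $B_i=R_cR_fX_i$. Each of these objects is both fibrant and cofibrant, and $A$ is $\lambda$-presentable because $R_f$ and $R_c$ preserve $\lambda$-presentable objects; moreover $R_cR_f$ preserves $\lambda$-filtered colimits, so $B:=R_cR_fX=\colim_i B_i$ in $\ck$. As source and target are fibrant--cofibrant, $\Ho\ck(PK,PX)$ is the set $[A,B]$ of homotopy classes of maps in $\ck$, and the task becomes to establish a natural bijection $[A,\colim_i B_i]\cong\colim_i[A,B_i]$. On underlying hom-sets this is immediate: $\lambda$-presentability of $A$ gives $\ck(A,\colim_i B_i)\cong\colim_i\ck(A,B_i)$, so the canonical map is a bijection before passing to homotopy classes.

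The real work is to check that the homotopy relation passes to the colimit. Here I would use a cylinder object $\mathrm{Cyl}(A)$, obtained by factoring the fold map $A\sqcup A\to A$ as a cofibration followed by a weak equivalence. Since the factorization functors are $\lambda$-accessible and, after enlarging $\lambda$ if necessary (which Theorem~\ref{strongly} permits), preserve $\lambda$-presentable objects, and $A\sqcup A$ is $\lambda$-presentable, the object $\mathrm{Cyl}(A)$ is $\lambda$-presentable as well. Because the $B_i$ and $B$ are fibrant and $A$ is cofibrant, two maps $A\to B$ are homotopic if and only if they arise as the two ends of a map $\mathrm{Cyl}(A)\to B$; such a map factors through some $B_i$ by $\lambda$-presentability of $\mathrm{Cyl}(A)$, so any homotopy taking place in $B$ is already witnessed at some stage $B_i$, while its two endpoint restrictions are matched with the given representatives at a further stage by applying $\lambda$-presentability of $A$ once more. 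This is exactly the standard argument that a $\lambda$-filtered colimit of sets is compatible with quotients by relations that are themselves $\lambda$-presentably detected, and it yields the desired bijection.

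The main obstacle is precisely this last step: guaranteeing that the cylinder object (equivalently, a path object on the target) can be chosen $\lambda$-presentable, so that the homotopy relation is detected at a bounded stage of the colimit. Everything else is a formal consequence of the preservation properties packaged into the definition of a strongly $\lambda$-combinatorial model category, together with the objectwise computation of colimits in the presheaf category.
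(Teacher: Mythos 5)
The paper does not actually prove this theorem; it quotes it from \cite[Proposition~5.1]{R2}, and your argument is essentially the proof given there. The reduction is correct and complete: colimits of presheaves are objectwise, $\Ho\ck(PK,PX)$ is the set of homotopy classes $[A,B]$ with $A=R_cR_fK$ a $\lambda$-presentable cofibrant--fibrant object and $B=R_cR_fX\cong\colim_i B_i$ (using condition 3 of Theorem~\ref{strongly}), $\lambda$-presentability of $A$ handles the underlying hom-sets, and the only real content is that the homotopy relation is detected at a bounded stage of the colimit.

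The one step I would push back on is how you secure a $\lambda$-presentable cylinder. The theorem is asserted for the given $\lambda$, and the definition of \emph{strongly $\lambda$-combinatorial} records only that $R_f$ and $R_c$ preserve $\lambda$-presentable objects, not that the full factorization functors do; so ``enlarging $\lambda$ if necessary'' proves the statement for some larger cardinal rather than the one in the hypothesis. The clean repair is the variant you mention only in parentheses: use a path object on the target instead of a cylinder on the source. The functorial factorization of the diagonal $B_i\to B_i\times B_i$ into a weak equivalence followed by a fibration is $\lambda$-accessible (the $\mu$-accessible factorizations from \cite[Proposition~3.1]{R2}, with $\mu\le\lambda$, preserve $\lambda$-filtered colimits), and since $\lambda$-small limits commute with $\lambda$-filtered colimits in a locally $\lambda$-presentable category one gets $B^I\cong\colim_i B_i^I$. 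A right homotopy $A\to B^I$ between the images of $f$ and $g$ then factors through some $B_j^I$, its two endpoints are matched with $f$ and $g$ at a later stage by $\lambda$-presentability of $A$, and because $A$ is cofibrant and the $B_k$ are fibrant, a right homotopy via a path object of $B_k$ already identifies $f$ and $g$ in $[A,B_k]$. This version needs no presentability hypothesis on the cylinder or path object and works for the given $\lambda$, which is what the statement requires.
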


\begin{coro}
\label{triangle}
If $\ck$ is strongly $\lambda$-combinatorial, then $E_\lambda P\cong \Ind_\lambda P_\lambda$.
\end{coro}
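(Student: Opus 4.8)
The plan is to obtain the isomorphism as a formal consequence of Theorem~\ref{functor} together with the universal property of the free $\lambda$-filtered cocompletion $\Ind_\lambda$. Since $\ck$ is strongly $\lambda$-combinatorial it is locally $\lambda$-presentable by condition~1 of Theorem~\ref{strongly}, so the canonical comparison functor $\Ind_\lambda\ck_\lambda\to\ck$ is an equivalence \cite{AR}; equivalently, every object of $\ck$ is the $\lambda$-filtered colimit of its canonical diagram with values in $\ck_\lambda$. Under this identification the universal property of $\Ind_\lambda\ck_\lambda$ asserts that restriction along $\ck_\lambda\hookrightarrow\ck$ yields an equivalence between the $\lambda$-filtered-colimit-preserving functors $\ck\to\Set^{({\Ho\ck_\lambda})^{\op}}$ and the arbitrary functors $\ck_\lambda\to\Set^{({\Ho\ck_\lambda})^{\op}}$. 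My strategy is therefore to exhibit $E_\lambda P$ and $\Ind_\lambda P_\lambda$ as two $\lambda$-filtered-colimit-preserving functors sharing the same restriction to $\ck_\lambda$, so that the universal property forces them to be naturally isomorphic.

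The first ingredient is that both functors preserve $\lambda$-filtered colimits: for $E_\lambda P$ this is precisely Theorem~\ref{functor}, whereas for $\Ind_\lambda P_\lambda$ it holds by construction, since $\Ind_\lambda$ applied to $P_\lambda\colon\ck_\lambda\to\Ho\ck_\lambda$ is by definition the essentially unique $\lambda$-filtered-colimit-preserving extension of the composite of $P_\lambda$ with the Yoneda embedding $\Ho\ck_\lambda\hookrightarrow\Set^{({\Ho\ck_\lambda})^{\op}}$.

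The second ingredient is the comparison of the two restrictions to $\ck_\lambda$. For an object $X$ of $\ck_\lambda$ the object $PX$ belongs to $\Ho\ck_\lambda$, since by definition $\Ho\ck_\lambda$ is the full image of the restriction of $P$ to $\ck_\lambda$; it is here, and in Theorem~\ref{functor}, that the preservation of $\lambda$-presentable objects by $R_f$ and $R_c$ (condition~3 of Theorem~\ref{strongly}) intervenes, guaranteeing that $\Ho\ck_\lambda$ really consists of $\lambda$-presentable objects and that the representables attached to them are $\lambda$-presentable presheaves. Since $\Ho\ck_\lambda$ is full in $\Ho\ck$, the presheaf $E_\lambda P(X)=\Ho\ck(-,PX)$ restricted to $\Ho\ck_\lambda$ is exactly the representable $\Ho\ck_\lambda(-,P_\lambda X)$. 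Thus on $\ck_\lambda$ the functor $E_\lambda P$ agrees, naturally in $X$, with the Yoneda composite that $\Ind_\lambda P_\lambda$ extends.

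Putting the two ingredients together with the universal property recalled in the first paragraph yields the desired natural isomorphism $E_\lambda P\cong\Ind_\lambda P_\lambda$. Since Theorem~\ref{functor} already carries the essential analytic content, the remaining point demanding care is the identification in the previous paragraph: one must check that $E_\lambda P$ sends each $\lambda$-presentable object to an honest representable, rather than to a general $\lambda$-filtered colimit of representables, which is also what shows that $E_\lambda P$ factors through $\Ind_\lambda\Ho\ck_\lambda\subseteq\Set^{({\Ho\ck_\lambda})^{\op}}$ and thus matches the codomain of $\Ind_\lambda P_\lambda$.
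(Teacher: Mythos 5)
Your argument is correct and is essentially the one the paper intends: the paper defers to \cite[Corollary~5.2]{R2}, which likewise deduces the isomorphism from the $\lambda$-filtered-colimit preservation of $E_\lambda P$ (Theorem~\ref{functor}) together with the identification $\ck\simeq\Ind_\lambda\ck_\lambda$ and the fact that both functors restrict on $\ck_\lambda$ to the representables $\Ho\ck_\lambda(-,P_\lambda X)$. Your closing remark correctly isolates the point that makes the codomain restriction to $\Ind_\lambda\Ho\ck_\lambda$ legitimate, namely that $\lambda$-presentable objects are sent to honest representables.
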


Here $P_\lambda\colon\ck_{\lambda}\to\Ho\ck_{\lambda}$ is the domain and codomain restriction of $P$, and $\Ind_\lambda$ denotes free completion under $\lambda$-filtered colimits. Therefore $\Ind_\lambda P_{\lambda}$ is a functor from $\ck$ to $\Ind_\lambda\Ho\ck_{\lambda}$.
The statement of Corollary~\ref{triangle} means that $E_\lambda$ factorizes through the inclusion \[
\Ind_\lambda\Ho\ck_\lambda\subseteq\Set^{({\Ho\ck_\lambda})^{\op}},
\]
and the codomain restriction $E_\lambda\colon \Ho\ck\to \Ind_\lambda\Ho\ck_{\lambda}$, which we keep denoting by $E_\lambda$, makes the composite $E_\lambda P$ isomorphic to $\Ind_\lambda P_\lambda$.

If the model category $\ck$ is pointed, then $\Ind_\lambda\Ho\ck_\lambda$ is also pointed and $E_{\lambda}$ preserves the zero object $0$, since $E_{\lambda}0$ is terminal and it is also initial because $0$ is $\lambda$-presentable and $E_{\lambda}$ is full and faithful on morphisms with domain in $\Ho\ck_{\lambda}$.

\begin{coro}
\label{coprod}
If $\ck$ is a strongly $\lambda$-combinatorial model category, then the functor
$E_\lambda\colon\Ho\ck\to\Ind_\lambda\Ho\ck_\lambda$
preserves coproducts.
\end{coro}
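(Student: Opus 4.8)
The plan is to deduce the statement from the two preceding results by reducing an arbitrary coproduct to a manageable form. Note first that $\Ho\ck$ has coproducts, since $\ck$ is a model category, and that $\Ind_\lambda\Ho\ck_\lambda$ is locally $\lambda$-presentable, hence cocomplete. I would begin by replacing the given family $(X_i)_{i\in I}$ in $\Ho\ck$ by cofibrant objects $Y_i$ in $\ck$ with $P(Y_i)\cong X_i$. Since the coproduct functor $\ck^I\to\ck$ is left Quillen, it preserves weak equivalences between cofibrant objects and sends cofibrant objects to cofibrant ones; hence $\coprod_{i\in I}Y_i$ is cofibrant and $P$ carries it to the coproduct $\coprod_{i\in I}X_i$ in $\Ho\ck$, because $P$ is naturally isomorphic to the canonical localization, which computes coproducts of cofibrant objects as ordinary coproducts. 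Thus it suffices to prove that $E_\lambda P$ sends $\coprod_{i\in I}Y_i$ to the coproduct $\coprod_{i\in I}E_\lambda P(Y_i)$ in $\Ind_\lambda\Ho\ck_\lambda$ through the canonical comparison map.

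Next I would shrink the coproduct. Writing $\coprod_{i\in I}Y_i=\colim_{J}\coprod_{i\in J}Y_i$ as the colimit over the subsets $J\subseteq I$ with $|J|<\lambda$ --- a $\lambda$-filtered poset because $\lambda$ is regular --- and using that $E_\lambda P$ preserves $\lambda$-filtered colimits (Theorem~\ref{functor}) while coproducts commute with $\lambda$-filtered colimits on both sides, the problem reduces to coproducts indexed by sets $J$ with $|J|<\lambda$. Applying the cofibrant replacement functor $R_c$, which preserves $\lambda$-filtered colimits and $\lambda$-presentable objects, to a $\lambda$-filtered presentation of each $Y_i$, I may further assume that each $Y_i$ is a $\lambda$-filtered colimit of cofibrant $\lambda$-presentable objects; the same interchange of colimits (the product of fewer than $\lambda$ many $\lambda$-filtered categories is again $\lambda$-filtered) then reduces the whole statement to the following claim: for cofibrant $\lambda$-presentable objects $A_i$ with $i\in J$ and $|J|<\lambda$, the comparison map $\coprod_{i\in J}E_\lambda P(A_i)\to E_\lambda P\bigl(\coprod_{i\in J}A_i\bigr)$ is an isomorphism.

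To settle this claim I would argue by representability. The coproduct $\coprod_{i\in J}A_i$ is again cofibrant and $\lambda$-presentable, so it lies in $\ck_\lambda$ and $P$ sends it to the coproduct of the $P(A_i)$ in $\Ho\ck$; moreover, by Corollary~\ref{triangle}, the restriction of $E_\lambda$ to $\Ho\ck_\lambda$ is the Yoneda embedding into $\Ind_\lambda\Ho\ck_\lambda$. Setting $a_i:=P(A_i)\in\Ho\ck_\lambda$ and $c:=\coprod_{i\in J}a_i\in\Ho\ck_\lambda$, it remains to check that the representable functor on $c$ is the coproduct in $\Ind_\lambda\Ho\ck_\lambda$ of the representables on the $a_i$. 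By the Yoneda lemma this amounts to showing $F(c)\cong\prod_{i\in J}F(a_i)$ naturally for every $F$ in $\Ind_\lambda\Ho\ck_\lambda$. Writing $F=\colim_e\Ho\ck_\lambda(-,t_e)$ as a $\lambda$-filtered colimit of representables, the left-hand side is $\colim_e\Ho\ck(c,t_e)=\colim_e\prod_{i\in J}\Ho\ck(a_i,t_e)$, while the right-hand side is $\prod_{i\in J}\colim_e\Ho\ck(a_i,t_e)$; these agree because products of fewer than $\lambda$ sets commute with $\lambda$-filtered colimits of sets, which is precisely the regularity of $\lambda$.

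The hard part is this last interchange, together with the underlying point that coproducts in $\Ind_\lambda\Ho\ck_\lambda$ are \emph{not} computed as in the ambient presheaf category $\Set^{({\Ho\ck_\lambda})^{\op}}$, so the comparison cannot be verified naively objectwise. What makes it go through is the combination of the $\lambda$-presentability of the objects of $\Ho\ck_\lambda$ with the regularity of $\lambda$, reinforced by the cofibrancy bookkeeping of the earlier steps, which is exactly what guarantees that $P$ genuinely takes the coproduct of the $A_i$ in $\ck$ to their coproduct in $\Ho\ck$.
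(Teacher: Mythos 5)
Your argument is correct and follows essentially the same route as the paper's proof: reduce to coproducts of cofibrant objects, then to $\lambda$-small coproducts via $\lambda$-filtered colimits and Theorem~\ref{functor}, then to cofibrant $\lambda$-presentable objects using $R_c$ and an interchange of colimits. The only difference is that you spell out the final step --- that the restricted Yoneda embedding sends a $\lambda$-small coproduct in $\Ho\ck_\lambda$ to a coproduct in $\Ind_\lambda\Ho\ck_\lambda$, via commutation of $\lambda$-small products with $\lambda$-filtered colimits in $\Set$ --- which the paper leaves implicit; your side remark that $\Ind_\lambda\Ho\ck_\lambda$ is locally $\lambda$-presentable is not justified (and not needed).
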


\begin{proof}
Pick a cofibrant replacement functor $R_c$ preserving $\lambda$-filtered colimits and $\lambda$-presentable objects.
Note that $P$ preserves coproducts between cofibrant objects and every object in $\Ho\ck$ is isomorphic to $PX$ for some cofibrant object $X$ in~$\ck$. Hence, using Corollary~\ref{triangle} it suffices to show that $\Ind_\lambda P_\lambda$ preserves coproducts between cofibrant objects.
Since each coproduct is a $\lambda$-filtered colimit of $\lambda$-small coproducts and $\Ind_\lambda P_\lambda$ preserves $\lambda$-filtered colimits, we have to prove that $\Ind_\lambda P_\lambda$ preserves $\lambda$-small coproducts between cofibrant objects. Let $\coprod_{i\in I} K_i$ be such a coproduct, so that the cardinality of $I$ is smaller than~$\lambda$. 
Since the functor $R_c$ preserves $\lambda$\nobreakdash-filt\-ered colimits and $\lambda$\nobreakdash-pres\-ent\-able objects, each $K_i$ is a $\lambda$\nobreakdash-filt\-ered colimit of cofibrant $\lambda$\nobreakdash-pres\-ent\-able objects. Let $D_i\colon\cd_i\to\ck_\lambda$ denote the corresponding diagrams, so that $K_i\cong\colim D_i$. Then $\coprod_{i\in I}K_i$ is a colimit of a $\lambda$-filtered diagram whose values are coproducts $\coprod_{i\in I} D_id_i$ with $d_i\in\cd_i$, and each such coproduct $\coprod_{i\in I} D_id_i$ is $\lambda$-presentable as the cardinality of $I$ is smaller than~$\lambda$. Since the functor $\Ind_\lambda P_\lambda$ preserves $\lambda$\nobreakdash-filt\-ered colimits and $P_\lambda$ preserves $\lambda$-small coproducts of cofibrant objects, the result is proved.
\qed
\end{proof}

\begin{defn}
\label{small}
{\rm
Let $\cc$ be a category with coproducts and $\lambda$ a cardinal. An object $S$ of $\cc$ is
\emph{$\lambda$-small} if for every morphism $f\colon S\to\coprod_{i\in I}X_i$ there is a subset $J$ of $I$ of cardinality less than $\lambda$ such that $f$ factorizes as
\[
\xymatrix{
S \ar[r] & \coprod_{j\in J}X_j \ar[r] & \coprod_{i\in I}X_i,
}
%S\longrightarrow\coprod_{j\in J}X_j\longrightarrow\coprod_{i\in I}X_i,
\]
where the second morphism is the subcoproduct injection.
}
\end{defn}
 
We also say that $\aleph_0$-small objects are \emph{compact}. This terminology is due to Neeman \cite{Neeman}, who found how compactness should be defined for uncountable cardinals in triangulated categories. His definition was simplified by Krause in~\cite{Krause}. They considered compactness in additive categories but the definition makes sense in general.
Consider classes $\mathcal S$ of $\lambda$-small objects in a category $\cc$ with coproducts such that for every morphism $f\colon S\to\coprod_{i\in I}X_i$ with $S\in\mathcal S$ there exist morphisms $g_i\colon S_i\to X_i$ for which $S_i\in\mathcal S$ for all $i\in I$ and $f$ factorizes through 
\[
\coprod_{i\in I}g_i\colon\coprod_{i\in I}S_i\longrightarrow\coprod_{i\in I}X_i.
\]
Since the collection of such classes is closed under unions, there is a greatest class with this property.  
Its objects are called $\lambda$-\textit{compact}.

\begin{prop}
\label{many}
If $\ck$ is a strongly $\lambda$-combinatorial model category, then all objects in $\Ho\ck_{\lambda}$ are $\lambda$-compact in $\Ho\ck$.
\end{prop}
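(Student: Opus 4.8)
The plan is to produce a class $\mathcal{S}$ realizing the definition of $\lambda$-compactness and containing $\Ho\ck_\lambda$; since the $\lambda$-compact objects form the greatest such class, this suffices. I would take $\mathcal{S}$ to be the class of all objects of $\Ho\ck$ isomorphic to an object of $\Ho\ck_\lambda$. The whole argument runs through $E_\lambda\colon\Ho\ck\to\Ind_\lambda\Ho\ck_\lambda$, using three inputs: $E_\lambda$ is full and faithful on morphisms out of $\Ho\ck_\lambda$; $E_\lambda$ preserves coproducts (Corollary~\ref{coprod}); and each object of $\Ho\ck_\lambda$ is $\lambda$-presentable in $\Ind_\lambda\Ho\ck_\lambda$. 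I would also record at the outset that $\Ho\ck_\lambda$ is closed up to isomorphism under $\lambda$-small coproducts, because $P$ preserves coproducts of cofibrant objects and $\lambda$-presentable objects are closed under $\lambda$-small colimits.

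Given $A\in\Ho\ck_\lambda$ and $f\colon A\to\coprod_{i\in I}X_i$, I would first present each $E_\lambda X_i$ as a $\lambda$-filtered colimit of objects of $\Ho\ck_\lambda$ whose legs come from $\Ho\ck$: writing $X_i\cong PY_i$ with $Y_i\cong\colim D_i$ for a $\lambda$-filtered diagram $D_i\colon\cd_i\to\ck_\lambda$ of cofibrant $\lambda$-presentable objects (as in Corollary~\ref{coprod}) and using that $E_\lambda P$ preserves $\lambda$-filtered colimits (Theorem~\ref{functor}), I obtain $E_\lambda X_i\cong\colim_{d\in\cd_i}E_\lambda A_{i,d}$ with $A_{i,d}:=PD_id\in\Ho\ck_\lambda$ and legs equal to $E_\lambda$ of honest morphisms $A_{i,d}\to X_i$. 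Since $E_\lambda$ preserves coproducts, $E_\lambda\bigl(\coprod_iX_i\bigr)\cong\coprod_iE_\lambda X_i$, and, exactly as in Corollary~\ref{coprod}, this coproduct of $\lambda$-filtered colimits is itself a $\lambda$-filtered colimit whose stages are the subcoproducts $\coprod_{j\in J}E_\lambda A_{j,d_j}=E_\lambda\bigl(\coprod_{j\in J}A_{j,d_j}\bigr)$ indexed by pairs $(J,(d_j))$ with $J\subseteq I$ of cardinality less than $\lambda$; each stage is $\lambda$-presentable. As $E_\lambda A$ is $\lambda$-presentable, $E_\lambda f$ then factors through one stage, by a morphism $\phi\colon E_\lambda A\to E_\lambda\bigl(\coprod_{j\in J}A_{j,d_j}\bigr)$ followed by the colimit injection $\psi$.

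Next I would descend this to $\Ho\ck$. Both $A$ and $\coprod_{j\in J}A_{j,d_j}$ lie in $\Ho\ck_\lambda$, so $\phi$ is $E_\lambda$ of a unique $\bar\phi\colon A\to\coprod_{j\in J}A_{j,d_j}$. By construction $\psi$ is the coproduct over $j\in J$ of the legs $A_{j,d_j}\to X_j$ composed with the coproduct injections; extending the family by choosing, for each $i\notin J$, any leg $g_i\colon A_{i,d_i}\to X_i$ (possible since $\cd_i$ is nonempty, or $g_i\colon0\to X_i$ when $\ck$ is pointed) and letting $g_j$ be the chosen leg for $j\in J$, I can exhibit $\psi$ as $E_\lambda$ of the composite of the inclusion $\coprod_{j\in J}A_{j,d_j}\to\coprod_iA_{i,d_i}$ with $\coprod_{i\in I}g_i$. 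Faithfulness of $E_\lambda$ on morphisms out of $A$ then yields a factorization of $f$ through $\coprod_{i\in I}g_i\colon\coprod_iA_{i,d_i}\to\coprod_iX_i$ with every $A_{i,d_i}\in\mathcal{S}$; the same factorization shows $f$ passes through the subcoproduct $\coprod_{j\in J}X_j$, so each object of $\mathcal{S}$ is in addition $\lambda$-small. Hence $\mathcal{S}$ is admissible and every object of $\Ho\ck_\lambda$ is $\lambda$-compact.

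The hard part will be the descent in the previous paragraph: converting the purely formal factorization of $E_\lambda f$ inside $\Ind_\lambda\Ho\ck_\lambda$ into an actual factorization in $\Ho\ck$ through a coproduct $\coprod_ig_i$ of maps with $\lambda$-compact domains. This forces two preparations, namely that the intermediate object $\coprod_{j\in J}A_{j,d_j}$ remains in $\Ho\ck_\lambda$, so that full faithfulness of $E_\lambda$ applies to it and not merely to $A$, and that the colimit injection $\psi$ is recognizably $E_\lambda$ of a coproduct of morphisms that already exist in $\Ho\ck$. The latter is precisely why I would build the colimit presentation of $E_\lambda X_i$ by transporting a $\lambda$-filtered colimit of $\ck$ through $E_\lambda P$, rather than invoking an abstract presentation inside the Ind-category, where the colimit legs would not visibly descend to $\Ho\ck$.
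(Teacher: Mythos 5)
Your proof is correct and follows essentially the same route as the paper's: apply $E_\lambda$, use preservation of coproducts together with the $\lambda$-filtered decomposition from Corollary~\ref{coprod} to factor $E_\lambda f$ through a $\lambda$-small subcoproduct of objects coming from $\Ho\ck_\lambda$, and descend to $\Ho\ck$ via full faithfulness of $E_\lambda$ on morphisms with domain in $\Ho\ck_\lambda$. You are somewhat more explicit than the paper about completing the factorization to a family $g_i\colon S_i\to X_i$ indexed by all of $I$ (the paper leaves this step implicit), but this is a presentational difference, not a different argument.
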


\begin{proof}
Choose fibrant and cofibrant replacement functors $R_f$ and $R_c$ preserving $\lambda$\nobreakdash-filt\-ered colimits and $\lambda$-presentable objects, and let $P\colon\ck\to\Ho\ck$ be as in~\eqref{P}.
Suppose given a morphism $g\colon PA\to\coprod_{i\in I} PK_i$ in $\Ho\ck$ where $A$ is in $\ck_{\lambda}$.
%$g\colon PA\to\coprod_{i\in I} PK_i$ be a given morphism where $A$ is in $\ck_\lambda$. We may assume that $A$ and each $K_i$ are fibrant and cofibrant. 
%Then there is a morphism $f\colon A\to\coprod_{i\in I}K_i$ such that $g=Pf$. 
According to Corollary~\ref{coprod}, we have
\[
E_\lambda g\colon E_\lambda PA\longrightarrow\coprod_{i\in I}E_\lambda PK_i.
\]
Due to the fact that $E_{\lambda}P$ preserves $\lambda$-presentable objects,
$E_\lambda PA$ is $\lambda$-presentable in $\Ind_\lambda\Ho\ck_\lambda$. Since each coproduct is a $\lambda$-filtered co\-limit of $\lambda$-small subcoproducts, $E_\lambda g$ factorizes through some $\coprod_{j\in J}E_\lambda PK_j$ where $J$ has cardinality smaller than~$\lambda$. Since %$\coprod_{j\in J}PK_j$ is in $\ck_\lambda$ and 
$E_\lambda $ is full and faithful on morphisms with domain in $\Ho\ck_\lambda$, we obtain a factorization of $g$ through $\coprod_{j\in J}PK_j$ and therefore we conclude that $PA$ is $\lambda$-small.
 
Moreover, the argument used in the proof of Corollary~\ref{coprod} shows in a similar way that $E_{\lambda}g$ factors through some coproduct $\coprod_{j\in J} E_{\lambda}PD_jd_j$ where $J$ has cardinality smaller than $\lambda$ and $D_jd_j$ is in $\ck_{\lambda}$ for all~$j$. Using again the fact that $E_\lambda $ is full and faithful on morphisms with domain in $\Ho\ck_\lambda$, we find a factorization of $g$ through $\coprod_{j\in J} PD_jd_j$. Hence $PA$ is indeed $\lambda$-compact.
%every morphism $g\colon PA\to\coprod_{i\in I}PK_i$ with $A$ in $\ck_\lambda$ is sent by $E_\lambda$ to a morphism whose codomain is a $\lambda$-filtered colimit of coproducts of objects from~$\Ho\ck_\lambda$. Hence objects from $\Ho\ck_\lambda$ are indeed $\lambda$-compact.
\qed
\end{proof}

\begin{defn}
\label{wellgen}
{\rm
A category with coproducts is called \textit{well $\lambda$-generated} if it has a small weak generator consisting of $\lambda$-compact objects.
It is called \textit{well generated} if it is well $\lambda$-generated for some $\lambda$.
}
\end{defn}

For example, every locally $\lambda$-presentable category is well $\lambda$-generated. 

The following result was proved in \cite[Proposition~6.10]{R2} with the additional assumption that $\ck$ was stable, which is not necessary.

\begin{thm}
\label{well}
If $\ck$ is a strongly $\lambda$-combinatorial model category, then $\Ho\ck$ is well $\lambda$-generated.  
\end{thm}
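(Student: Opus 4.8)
The plan is to assemble pieces that are already in place: the weak generator of $\Ho\ck$ with controlled presentability furnished by Theorem~\ref{strongly}, together with the $\lambda$-compactness statement of Proposition~\ref{many}. By Definition~\ref{wellgen}, proving that $\Ho\ck$ is well $\lambda$-generated amounts to exhibiting a small weak generator whose objects are $\lambda$-compact, once one knows that $\Ho\ck$ has coproducts.

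First I would record that $\Ho\ck$ has coproducts. Since $\ck$ is locally presentable, hence cocomplete, and a coproduct of cofibrant objects is again cofibrant, the functor $P$ sends coproducts of cofibrant objects to coproducts in $\Ho\ck$; as every object of $\Ho\ck$ is isomorphic to $PX$ for some cofibrant $X$, this yields all coproducts in $\Ho\ck$. This is in any case implicit in the statement of Corollary~\ref{coprod}.

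Next I would invoke condition~2 of Theorem~\ref{strongly}, which provides a small weak generator $\ca$ of $\Ho\ck$ consisting of $\lambda$-presentable objects. The point to extract is that each object of $\ca$ is, by construction, the image under $P$ of a $\lambda$-presentable object of $\ck$, so that $\ca$ is contained, up to isomorphism, in the full subcategory $\Ho\ck_\lambda$. Then Proposition~\ref{many} applies directly: every object of $\Ho\ck_\lambda$ is $\lambda$-compact in $\Ho\ck$, and therefore every object of $\ca$ is $\lambda$-compact. Thus $\ca$ is a small weak generator of $\Ho\ck$ consisting of $\lambda$-compact objects, and Definition~\ref{wellgen} gives the conclusion.

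The only step demanding care, and the main (modest) obstacle, is verifying that the weak generator produced by Theorem~\ref{strongly} genuinely lands inside $\Ho\ck_\lambda$, so that Proposition~\ref{many} is applicable. Concretely, one must track that the domains and codomains of the generating cofibrations from which $\ca$ is built are $\lambda$-presentable objects of $\ck$, whence their $P$-images are objects of $\Ho\ck_\lambda$. Once this identification is made, no further estimate is needed and the theorem follows by combining the cited results.
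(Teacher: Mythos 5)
Your proof is correct and follows essentially the same route as the paper: it invokes condition~2 of Theorem~\ref{strongly} to place a small weak generator inside $\Ho\ck_\lambda$ (so that $\Ho\ck_\lambda$ itself weakly generates) and then applies Proposition~\ref{many} to get $\lambda$-compactness. The extra remarks about the existence of coproducts in $\Ho\ck$ and about tracking the generator into $\Ho\ck_\lambda$ are points the paper leaves implicit, but they are handled correctly.
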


\begin{proof}
Since, by assumption, there is a small weak generator of $\Ho\ck$ whose objects are $\lambda$-pres\-ent\-able, $\Ho\ck_\lambda$ weakly generates $\Ho\ck$.  
The rest follows from Proposition~\ref{many}.
\qed 
\end{proof} 

As a corollary one infers Neeman's result in \cite{N1} that, for any Grothendieck abelian category $\ca$, the derived category $D(\ca)$ is well generated.  

%\newpage

\section{Ohkawa's theorem}
\label{sec:4}

For an endofunctor $H\colon\ck\to\ck$ (not necessarily preserving weak equivalences) on a model category~$\ck$, we consider the composition
\[
\xymatrix{
\ck \ar[r]^-{H} & \ck \ar[r]^-{P} & \Ho\ck,
}
%\ck\stackrel{H}{\longrightarrow}\ck\stackrel{P}{\longrightarrow}\Ho\ck,
\]
where $P$ is defined as in \eqref{P}.
The class of objects $X$ in $\ck$ such that $PHX$ is the terminal object in $\Ho\ck$ will be called the \emph{kernel} of $H$ and will be denoted by $\ker H$. Hence, if $\ck$ is pointed and $0$ denotes the zero object in $\ck$ and also its image in $\Ho\ck$, then $\ker H$ consists of objects $X$ in $\ck$ such that $PHX=0$.

In this section we prove the following result.

\begin{thm}
\label{ohkawa}
Suppose that $\ck$ is a pointed strongly $\lambda$-combinatorial model category.
Then there is only a set of distinct kernels of endofunctors $H\colon\ck\to\ck$ preserving $\lambda$-filtered colimits and the zero object.
\end{thm}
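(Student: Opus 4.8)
The plan is to strip away the model structure at the outset by composing with the restricted Yoneda embedding, and then to reduce the counting of kernels to an elementary bound on the ``vanishing loci'' of $\lambda$-accessible functors valued in $\Set$. First I would replace $H$ by the composite $G:=E_\lambda P H\colon\ck\to\cl$, where $\cl:=\Ind_\lambda\Ho\ck_\lambda$. As $H$ preserves $\lambda$-filtered colimits by hypothesis and $E_\lambda P$ preserves them by Theorem~\ref{functor}, the functor $G$ preserves $\lambda$-filtered colimits; and since $H$ preserves the zero object while $E_\lambda$ does too (as noted before Corollary~\ref{coprod}), we have $G0=0$. Both $\ck$ (by Theorem~\ref{strongly}) and $\cl$ are locally $\lambda$-presentable. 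The key point of this step is that $X\in\ker H$ if and only if $GX\cong 0$: if $PHX\cong 0$ then $GX\cong E_\lambda 0=0$, and conversely if $E_\lambda PHX$ is terminal then $PHX$ is terminal, hence zero, because $\Ho\ck_\lambda$ is a weak generator of $\Ho\ck$. The problem thus becomes one about pointed $\lambda$-accessible functors $G\colon\ck\to\cl$ between locally $\lambda$-presentable categories, with no further reference to the model structure.

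Next I would test the condition $GX\cong 0$ on the $\lambda$-presentable objects of $\cl$. Since the objects of $\Ho\ck_\lambda$ form a strong generator of $\cl$, we have $GX\cong 0$ if and only if $\cl(B,GX)$ is a singleton for every $B$ in $\Ho\ck_\lambda$. For each such $B$ set $\Theta_B:=\cl(B,G-)\colon\ck\to\Set$; because $B$ is $\lambda$-presentable and $G$ is $\lambda$-accessible, $\Theta_B$ preserves $\lambda$-filtered colimits, and it is pointed since $\cl(B,0)=\ast$. As $\ker H=\bigcap_{B\in\Ho\ck_\lambda}\{X:\Theta_B X=\ast\}$ is an intersection over the small set $\Ho\ck_\lambda$, the theorem reduces to bounding, for a single pointed $\lambda$-accessible functor $\Theta\colon\ck\to\Set$, the number of distinct classes $\{X:\Theta X=\ast\}$.

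This last bound is the heart of the matter, and is where I expect the real work to lie. Writing each $X$ as the $\lambda$-filtered colimit of its canonical diagram, so that $\Theta X=\colim_{(A,a)}\Theta A$ with $A$ ranging over $\ck_\lambda$ and $a\colon A\to X$, an element of $\Theta X$ is represented by a pair $(a,s)$ with $s\in\Theta A$, and it equals the basepoint precisely when $a$ factors as $A\xrightarrow{\varphi}A'\to X$ for some $\varphi$ with $\Theta(\varphi)(s)=\ast$. For fixed $A$ and $s$ the morphisms $\varphi\colon A\to A'$ in $\ck_\lambda$ with $\Theta(\varphi)(s)=\ast$ form a sieve on $A$ (closed under postcomposition, since the basepoints are natural). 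Hence $\Theta X=\ast$ holds if and only if, for every $A$ in $\ck_\lambda$, every such vanishing sieve $S$, and every $a\colon A\to X$, the map $a$ factors through a member of $S$; consequently the class $\{X:\Theta X=\ast\}$ depends on $\Theta$ \emph{only} through the assignment sending each $A$ in $\ck_\lambda$ to the \emph{set} of sieves on $A$ that so arise. As $\ck_\lambda$ is essentially small, each object carries only a set of sieves, so this datum ranges over the set $\prod_{A}\mathcal P(\{\text{sieves on }A\})$; therefore only a set of classes $\{X:\Theta X=\ast\}$ occur. The main obstacle is exactly this sieve analysis together with the observation that the vanishing locus is blind to the actual values of $\Theta$ — which form a proper class of possibilities — and sees only which sieves occur; this is the locally presentable incarnation of Ohkawa's counting argument from~\cite{CGR2}.

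Finally I would assemble the pieces. By the previous step each $\{X:\Theta_B X=\ast\}$ lies in one fixed set $\mathcal V$ of classes, and $\ker H$ is determined by the function $B\mapsto\{X:\Theta_B X=\ast\}$ from the small set $\Ho\ck_\lambda$ to $\mathcal V$. There are only a set of such functions, hence only a set of distinct kernels, as claimed.
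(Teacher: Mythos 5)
Your proposal is correct and takes essentially the same route as the paper: reduce via $E_\lambda P$ (Theorem~\ref{functor} and Corollary~\ref{triangle}) to $\lambda$-accessible $\Set$-valued data, write $X$ as the $\lambda$-filtered colimit of its canonical diagram over $\ck_\lambda$, and observe that the kernel is determined by the set-sized record of which morphisms of $\ck_\lambda$ annihilate which generalized elements --- your ``vanishing sieves'' $S_{A,s}$ are precisely the paper's sets $T_H(f)$ under the identification of $s\in\Theta_B A$ with a morphism $f\colon E_\lambda S\to E_\lambda PHA$. The differences are only organizational: you keep the annihilator sets indexed by $(B,A)$ and extract a direct if-and-only-if description of $\ker H$ from the invariant, whereas the paper pools them into the single unindexed set $J(H)$ and must then run the filtered-colimit factorization argument (using nonemptiness of $T_{H_2}(\hat f)$ to match domains) to show that $J$ determines the kernel; also, your collections are cosieves (closed under postcomposition) rather than sieves, a harmless slip of terminology.
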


\begin{proof}
Consider the restricted Yoneda embedding as given by Corollary~\ref{triangle},
\[
E_{\lambda}\colon\Ho\ck\longrightarrow\Ind_\lambda\Ho\ck_\lambda.
\]
For a morphism $f\colon E_{\lambda}S\to E_{\lambda}PHA$ with $A\in\ck_{\lambda}$ and $S\in\Ho\ck_{\lambda}$, let us denote by $T_H(f)$ the set of all morphisms $t\colon A\to B$ in $\ck_{\lambda}$ such that the composite
\[
\xymatrix{
E_{\lambda}S \ar[r]^-{f} & E_{\lambda}PHA \ar[rr]^{E_{\lambda}PHt} & & E_{\lambda}PHB
}
\]
is the zero morphism, that is, $E_{\lambda}PHt\circ f$ factors through the zero object.

Next, we denote
\[
J(H)=\{T_H(f)\mid f\colon \text{$E_{\lambda}S\to E_{\lambda}PHA$ with $A\in\ck_{\lambda}$ and $S\in\Ho\ck_{\lambda}$}\}.
\]

We are going to prove that if $J(H_1)=J(H_2)$ then $\ker H_1=\ker H_2$, assuming that $H_1$ and $H_2$ preserve $\lambda$-filtered colimits and the zero object. Thus suppose that $J(H_2)\subseteq J(H_1)$ and let $X\in\ker H_1$. 
In order to prove that $PH_2X=0$, it is enough to show that every morphism $E_{\lambda}G\to E_{\lambda}PH_2X$ factors through the zero object if $G$ is in $\Ho\ck_{\lambda}$, since $\Ho\ck_{\lambda}$ is a weak generator of $\Ho\ck$ and $E_{\lambda}$ is full and faithful on morphisms whose domain is in $\Ho\ck_{\lambda}$.

Assume given such a morphism $f\colon E_{\lambda}G\to E_{\lambda}PH_2X$. Since the category $\ck$ is locally $\lambda$\nobreakdash-pres\-entable, $X\cong\colim(D\colon\cd\to\ck_{\lambda})$ for a certain $\lambda$-filtered diagram $D$. Since $E_{\lambda}PH_2$ preserves $\lambda$-filtered colimits by Theorem~\ref{functor}, we then have
\[
E_{\lambda}P H_2X\cong\colim\left(\xymatrix{\cd \ar[r]^-{D} & \ck_{\lambda} \ar[r]^-{PH_2} &
\Ho\ck \ar[r]^-{E_{\lambda}} & \Ind_\lambda\Ho\ck_\lambda}\right).
\]

Since $E_{\lambda}G$ is $\lambda$-presentable, $f$ factors through $\hat f\colon E_{\lambda}G\to E_{\lambda}P H_2Dd$ for some $d\in\cd$. Note that the set $T_{H_2}(\hat f)$ is nonempty, since the morphism $Dd\to 0$ is in it as $H_2$ preserves the zero object. Consequently, the assumption that $J(H_2)\subseteq J(H_1)$ implies that $T_{H_2}(\hat f)\in J(H_1)$. This means that there exist an object $V\in\Ho\ck_{\lambda}$ and a morphism $g\colon E_{\lambda}V\to E_{\lambda}PH_1Dd$ such that $T_{H_2}(\hat f)=T_{H_1}(g)$.

Now, since $X\in\ker H_1$, we have $E_{\lambda}PH_1X=0$. However,
\[
E_{\lambda}P H_1X\cong\colim\left(\xymatrix{\cd \ar[r]^-{D} & \ck_{\lambda} \ar[r]^-{PH_1} &
\Ho\ck \ar[r]^-{E_{\lambda}} & \Ind_\lambda\Ho\ck_\lambda}\right),
\]
and, since $E_{\lambda}V$ is $\lambda$-presentable, there is a morphism $\delta\colon d\to d'$ in $\cd$ such that
\[
\xymatrix{
E_{\lambda}V \ar[r]^-{g} & E_{\lambda}PH_1Dd \ar[rr]^-{E_{\lambda}PH_1D\delta} & &
E_{\lambda}PH_1Dd'
}
\]
factors through the zero object. Hence $D\delta\in T_{H_1}(g)$. Therefore $D\delta\in T_{H_2}(\hat f)$ and this implies that $f\colon E_{\lambda}G\to E_{\lambda}PH_2X$ factors through the zero object, as we wanted to show.

Finally, since there is only a set of distinct sets $J(H)$, the theorem is proved.
\qed
\end{proof}

Ohkawa's theorem \cite[Theorem~2]{Ohkawa} is a special case of Theorem~\ref{ohkawa}. Recall that two (reduced) homology theories $E_*$ and $F_*$ on spectra are said to be \emph{Bousfield equivalent} if the class of $E_*$\nobreakdash-acyc\-lic spectra coincides with the class of $F_*$-acyclic spectra. A spectrum $X$ is called \emph{$E_*$-acyclic} if $E_*(X)=0$.

\begin{coro}
There is only a set of Bousfield equivalence classes of representable homology theories on spectra.
\end{coro}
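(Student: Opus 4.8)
The plan is to realize every representable homology theory on spectra as the kernel of an endofunctor of precisely the kind controlled by Theorem~\ref{ohkawa}, and then to invoke that theorem directly. First I would fix a pointed combinatorial monoidal model $\ck$ for the stable homotopy category, for instance the category of symmetric spectra of \cite{HSS}, and choose a regular cardinal $\lambda$ for which $\ck$ is strongly $\lambda$-combinatorial, as provided by Theorem~\ref{strongly}. Being pointed and strongly $\lambda$-combinatorial, $\ck$ satisfies the hypotheses of Theorem~\ref{ohkawa}.

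Next I would set up a dictionary between homology theories and endofunctors. A representable reduced homology theory is given by a spectrum $E$ through $E_*(X)=\pi_*(E\wedge^{L}X)$, and I would model it by the endofunctor $H=E\wedge(-)$ on $\ck$, with $E$ chosen cofibrant. Two features of $H$ must be checked. Since $\ck$ is closed symmetric monoidal, $E\wedge(-)$ has a right adjoint (the internal function spectrum), so it preserves all colimits and in particular $\lambda$-filtered ones; and $E\wedge 0\cong 0$ because $0$ is the initial object. Hence $H$ preserves $\lambda$-filtered colimits and the zero object, exactly as required in Theorem~\ref{ohkawa}.

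I would then identify $\ker H$ with the class of $E_*$-acyclic spectra. By definition $X\in\ker H$ means $P(E\wedge X)=0$ in $\Ho\ck$. Because $E$ is cofibrant, hence flat, the functor $E\wedge(-)$ is homotopically well behaved and preserves weak equivalences, so $E\wedge X$ represents the derived smash product $E\wedge^{L}X$. Consequently $P(E\wedge X)=0$ if and only if $E\wedge^{L}X\simeq 0$, that is, $E_*(X)=0$. Thus $\ker H$ is precisely the class of $E_*$-acyclic spectra, and two representable homology theories are Bousfield equivalent exactly when the associated endofunctors have the same kernel.

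Finally, Theorem~\ref{ohkawa} guarantees that there is only a set of distinct kernels among endofunctors preserving $\lambda$-filtered colimits and the zero object; composing this with the dictionary above yields only a set of Bousfield equivalence classes of representable homology theories on spectra. I expect the main obstacle to be the identification in the previous paragraph: one must ensure that the point-set functor $E\wedge(-)$, followed by $P$, genuinely computes the derived homology theory, and this is exactly why $E$ is taken cofibrant (equivalently flat), so that $\ker H$ coincides with the $E_*$-acyclics rather than with some larger or smaller class. Everything else reduces to quoting the results already established.
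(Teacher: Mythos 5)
Your proposal is correct and takes essentially the same approach as the paper: model the stable homotopy category by symmetric spectra, encode each representable theory as the kernel of the smash endofunctor, and quote Theorem~\ref{ohkawa}. The only (minor) divergence is that the paper defines $H_EX=E\wedge R_cX$ with a filtered-colimit-preserving cofibrant replacement on the variable, so that the point-set smash computes the derived one without appealing to flatness, whereas you smash directly with a cofibrant $E$ and invoke the flatness of cofibrant symmetric spectra; both routes are valid.
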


\begin{proof}
The homotopy category of spectra admits a combinatorial model category $\ck$; for instance, symmetric spectra over simplicial sets \cite{HSS}. For each cofibrant spectrum $E$ we consider the endofunctor on $\ck$ defined as $H_EX=E\wedge R_cX$ where $R_c$ is a cofibrant replacement functor preserving filtered colimits. Since smashing with $E$ has a right adjoint, $H_E$ preserves filtered colimits. Moreover, a spectrum $X$ is in $\ker H_E$ if and only if $X$ is $E_*$-acyclic.
Hence Theorem~\ref{ohkawa} implies that there is only a set of distinct kernels of endofunctors of the form~$H_E$.
\qed
\end{proof}

\section{Generalized Brown representability}
\label{sec:5}

In this section we prove other properties of combinatorial homotopy categories related to results in~\cite{R2}.
Note that if $\cc$ is a locally $\lambda$-presentable category with the trivial model structure then the functor $E_\lambda\colon\cc\to\Ind_\lambda\cc_\lambda$ is an isomorphism.

\begin{defn}
\label{brown}
{\rm
A strongly $\lambda$-combinatorial model category $\ck$ is called \textit{$\lambda$-Brown on morphisms} if $E_\lambda\colon\Ho\ck\to\Ind_\lambda\Ho\ck_\lambda$ is full. It is called
\textit{$\lambda$-Brown on objects} if $E_\lambda$ is essentially surjective. Finally, $\ck$ is called \textit{$\lambda$-Brown}
if it is $\lambda$-Brown both on objects and on morphisms.
}
\end{defn}

Let us remark the following facts:
\begin{enumerate}
\item[(i)] A locally finitely presentable stable combinatorial model category is $\omega$-Brown if it is Brown in the sense of \cite{HPS}, where $\omega$ denotes the first infinite ordinal.
\item[(ii)] Whenever $\ck$ is strongly $\omega$-combinatorial and $E_\omega$ is full then $E_\omega$ is essentially surjective as well. 
In fact, by Corollary~\ref{triangle}, $\Ind_\omega P_\omega$ is full; since each object of $\Ind_\omega \ck_\omega$ can be obtained by taking successive colimits of smooth chains \cite{AR} and $P_\omega$ is essentially surjective on objects, $\Ind_\omega P_\omega$ is essentially surjective on objects too. Hence $\ck$ is $\omega$-Brown on objects. This argument does not work for $\lambda >\omega$ because, in the proof, we need colimits of chains of cofinality $\omega$. 
\item[(iii)] $E_\lambda$ is full if and only if $\Ho\ck_\lambda$ is weakly dense in~$\Ho\ck$.
\end{enumerate}

The homotopy category $\Ho\ck$ of any model category $\ck$ has weak colimits and weak limits. Weak colimits are constructed from coproducts and homotopy pushouts in the same way as colimits are constructed~from coproducts and pushouts.
A homotopy pushout of
\[
\xymatrix{PC & PA \ar[l]_-{Pg} \ar[r]^-{Pf} & PB}
\]
is a commutative diagram
\vspace*{-0.2cm}
\[
\xymatrix@C=3pc@R=3pc{
PA \ar[r]^{Pf_1} \ar[d]_{Pg_1} & PB_1 \ar[d]^{P\overline{g}}\\
PC_1 \ar [r]^{P\overline{f}} & PE}
\]
where $f=f_2\circ f_1$ and $g=g_2\circ g_1$ are factorizations of $f$ and $g$, respectively, into a cofibration followed by a trivial fibration, and
\[
\xymatrix@C=3pc@R=3pc{
A \ar[r]^{f_1} \ar[d]_{g_1} & B_1 \ar[d]^{\overline{g}}\\
C_1 \ar [r]^{\overline{f}} & E}
\]
is a pushout in $\ck$. The following definition is taken from \cite{BR}.

\begin{defn}
\label{nearly}
{\rm
A functor $H\colon\cc\to\cd$ will be called \textit{nearly full} if for each commutative triangle 
\[
\xymatrix@=3pc{
HA \ar[rr]^{Hh}
\ar[dr]_{f} && HC\\
& HB \ar[ur]_{Hg}}
\]
there is a morphism $\overline{f}\colon A\to B$ in $\cc$ such that $H\overline{f}=f$.
}
\end{defn}

\begin{prop}
\label{nearlybrown}
A strongly $\lambda$-combinatorial model category $\ck$ is $\lambda$-Brown on morphisms if and only if the functor $E_\lambda\colon\Ho\ck\to\Ind_\lambda\Ho\ck_\lambda$ is nearly full.
\end{prop}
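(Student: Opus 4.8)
The forward implication is immediate and carries no content: if $E_\lambda$ is full, then every morphism $f\colon E_\lambda A\to E_\lambda B$ is already of the form $E_\lambda\overline f$, so the lifting demanded in Definition~\ref{nearly} holds automatically and the commutative triangle plays no role. The substance of the statement is therefore the converse, and the plan is to show that near fullness forces every morphism $f\colon E_\lambda A\to E_\lambda B$, with $A,B$ arbitrary objects of $\Ho\ck$, to lie in the image of $E_\lambda$.

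Fix such an $f$. Viewing $E_\lambda$ as the restricted Yoneda embedding $X\mapsto\Ho\ck(-,X)$ restricted to $\Ho\ck_\lambda$, the morphism $f$ is a natural transformation; concretely, to every object $S$ of $\Ho\ck_\lambda$ and every $\sigma\colon S\to A$ it assigns a morphism $\psi_\sigma\colon S\to B$, compatibly with composition in $\Ho\ck_\lambda$. The key idea is to manufacture, out of the weak-colimit structure of $\Ho\ck$ alone, a single object $C$ together with morphisms $g\colon B\to C$ and $h\colon A\to C$ of $\Ho\ck$ such that $g\circ\psi_\sigma=h\circ\sigma$ for every $\sigma$. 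Once such $C$, $g$ and $h$ are available, one gets $E_\lambda g\circ f=E_\lambda h$ by a pointwise check, so that the commutative triangle required to invoke near fullness is in place.

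To build $C$, I would first use that $\Ho\ck_\lambda$ is small, so that the objects of the comma category $\Ho\ck_\lambda/A$ form a set; let $\coprod_\sigma S$ be the coproduct in $\Ho\ck$ indexed by the pairs $(S,\sigma)$, which exists because $\Ho\ck$ has coproducts. The families $(\sigma)$ and $(\psi_\sigma)$ assemble into two morphisms $\coprod_\sigma S\to A$ and $\coprod_\sigma S\to B$, and I would take $C$ to be a homotopy pushout, that is, a weak pushout in $\Ho\ck$, of the resulting span. The weak cocone then supplies $h\colon A\to C$ and $g\colon B\to C$ with $h\circ\sigma=g\circ\psi_\sigma$ on each summand, which is precisely the desired identity. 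Here it is essential that weak pushouts exist in $\Ho\ck$ and that the coproduct is indexed by a genuine set.

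With $C$, $g$ and $h$ in hand, the equality $E_\lambda g\circ f=E_\lambda h$ is checked componentwise: evaluated at $S$ and $\sigma$, both sides equal $g\circ\psi_\sigma=h\circ\sigma$. Near fullness then produces $\overline f\colon A\to B$ with $E_\lambda\overline f=f$, and since $f$ was arbitrary this shows that $E_\lambda$ is full. The main obstacle is exactly the construction of the comparison object $C$: one must realize $g\circ\psi_\sigma=h\circ\sigma$ for \emph{all} components of the natural transformation $f$ at once, and the crucial observation is that this can be achieved using only coproducts indexed by the small set $\Ho\ck_\lambda/A$ together with a single homotopy pushout, so that nothing beyond the existence of weak colimits in $\Ho\ck$ is required.
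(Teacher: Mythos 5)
Your argument is correct, but it takes a genuinely different route from the paper's. You complete an arbitrary $f\colon E_\lambda A\to E_\lambda B$ to a commutative triangle by forming the coproduct $\coprod_{(S,\sigma)}S$ over the (small) comma category $\Ho\ck_\lambda/A$, mapping it to $A$ via the $\sigma$'s and to $B$ via the components $\psi_\sigma=f_S(\sigma)$, and taking a weak pushout $C$ of the resulting span; the cocone legs $h\colon A\to C$ and $g\colon B\to C$ then satisfy $E_\lambda g\circ f=E_\lambda h$ by a pointwise check, and near fullness lifts $f$ in one stroke. All the ingredients you need (smallness of $\Ho\ck_\lambda$, coproducts and weak pushouts in $\Ho\ck$, componentwise equality of maps of presheaves) are available, so the proof closes, and it uses almost nothing of strong $\lambda$-combinatoriality. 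The paper instead writes each object $K$ as the $\lambda$-filtered colimit of its canonical diagram, builds the standard weak colimit $\overline K$ with comparison $t\colon\overline K\to K$, uses $\lambda$-accessibility of $\Ind_\lambda P_\lambda$ to obtain a section $u$ of $E_\lambda Pt$, and invokes near fullness only to lift $u$ --- that is, only for triangles with $h=\id$ --- before lifting arbitrary morphisms through the weak colimits. The payoff of that longer route is Remark~\ref{split}: fullness on split monomorphisms already suffices. Your argument consumes the full strength of Definition~\ref{nearly} and cannot yield that refinement. Indeed, your construction can be collapsed further: since $E_\lambda$ sends the terminal object of $\Ho\ck$ to the terminal presheaf, taking $C$ terminal makes the triangle commute for free, so for $E_\lambda$ the condition of Definition~\ref{nearly} as literally stated is formally no weaker than fullness. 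That observation is worth registering: the substantive content of the proposition is really its split-monomorphism version, which is exactly what the paper's proof isolates and your approach does not reach.
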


\begin{proof}
Sufficiency is evident because any full functor is nearly full. Let $\ck$ be a strongly $\lambda$-combinatorial model category and assume 
that $E_\lambda$ is nearly full. Consider an object $K$ in $\ck$ and express it as a $\lambda$-filtered colimit $(\delta_d\colon Dd\to K)$ of its canonical diagram 
$D\colon\mathcal D\to\ck_\lambda$. This means that we have
\[
\xymatrix@C=3pc@R=3pc{
Dd \ar[d]_{u_e}  \ar[rd]^{v_d}& &\\
\coprod\limits_{e\colon\! d\to d'} Dd \ar@<0.8ex>[r]^p \ar@<-0.6ex>[r]_q &
\coprod\limits_d Dd \ar@<0.1ex>[r]^{g}&\underset{\ }{K}\\
Dd \ar[u]^-{u_e} \ar[r]_{De} &Dd' \ar[u]_-{v_{d'}}&}
\]
where $g$ is given by a pushout
\[
\xymatrix@C=3pc@R=3pc{
\coprod\limits_d Dd  \ar [r]^g  & K\\
\left(\coprod\limits_e Dd\right)\coprod \left(\coprod\limits_d Dd \right) \ar [u]^-{(p,\, \id)} \ar [r]_-{(q,\, \id)} &
\coprod\limits_d Dd. \ar [u]_g}
\]
If we replace the pushout above by a homotopy push\-out, we get $(\overline{\delta}_d\colon Dd\to\overline{K})$.
It is not a cocone in $\ck$ but $(P\overline{\delta}_d\colon PDd\to P\overline{K})$ is a standard weak colimit \cite{Christensen}
in~$\Ho\ck$, and there is a comparison morphism $t\colon\overline{K}\to K$ such that $t\circ\overline{\delta}_d=\delta_d$ for each $d$. Since 
$H_\lambda=\Ind_\lambda P_\lambda$ preserves $\lambda$-filtered colimits, there is a morphism $u\colon H_\lambda K\to H_\lambda\overline{K}$ such that
$u\circ H_\lambda\delta_d=H_\lambda\overline{\delta}_d$ for each $d$. Then
$H_\lambda t\circ u=\id$
because
\[
H_\lambda t\circ u\circ H_\lambda\delta_d=H_\lambda(t\circ\overline{\delta}_d)=H_\lambda\delta_d.
\]

Now, since $E_\lambda$ is nearly full, there is $\overline{u}\colon PK\to P\overline{K}$ such that $u=E_\lambda\overline{u}$. 

Consider a morphism  $h\colon H_\lambda K_1\to H_\lambda K_2$. Let $u_1$, $t_1$, $u_2$, $t_2$ be as $u$, $t$ above for $K_1$ and $K_2$. There is a cocone 
($\gamma_d\colon PD_1d\to P\overline{K}_2$) from $PD_1$ such that
\[
E_\lambda\gamma_d=u_2\circ h\circ H_\lambda\delta_{1d}\colon H_\lambda D_1d\longrightarrow H_\lambda \overline{K}_2
\] 
for each $d$ in $\mathcal D_1$. Thus there is a morphism 
$
\overline{h}\colon\overline{K}_1\to\overline{K}_2
$
such that 
$
\overline{h}\circ P\overline{\delta}_{1d}=\gamma_d
$
for each $d$ in $\mathcal D_1$. Hence
\[
E_\lambda\overline{h}\circ u_1\circ H_\lambda\delta_{1d}=E_\lambda\overline{h}\circ H_\lambda\overline{\delta}_{1d}=
E_\lambda\gamma_d=u_2\circ h\circ H_\lambda\delta_{1d}
\]
for each $d$ in $\mathcal D_1$. Thus
$
E_\lambda\overline{h}\circ u_1=u_2\circ h.
$
Putting $h'=Pt_2\circ\overline{h}\circ\overline{u}_1$, we obtain
\[
E_\lambda h'=E_\lambda(Pt_2\circ\overline{h})\circ u_1=H_\lambda t_2\circ u_2\circ h=h,
\]
which proves that $E_\lambda$ is full.
\qed
\end{proof}

\begin{rem}
\label{split}
{\rm
In Proposition~\ref{nearlybrown} it suffices to assume that $E_\lambda$ is full on split mono\-morphisms. This means that $h=\id$ in Definition~\ref{nearly}.
}
\end{rem}

The following result is in \cite[Proposition~6.4]{R2}.

\begin{prop}
\label{iso}
If $\ck$ is a combinatorial stable model category, then $E_\lambda$ reflects isomorphisms for arbitrarily large regular cardinals $\lambda$.
\end{prop}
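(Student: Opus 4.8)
The plan is to exploit the triangulated structure that stability confers on $\Ho\ck$ together with the fact, recorded in Theorem~\ref{well}, that $\Ho\ck_\lambda$ is a weak generator of $\Ho\ck$. Since isomorphisms of presheaves are detected objectwise, a morphism $f\colon X\to Y$ of $\Ho\ck$ has $E_\lambda f$ invertible in $\Ind_\lambda\Ho\ck_\lambda$ if and only if $\Ho\ck(G,f)\colon\Ho\ck(G,X)\to\Ho\ck(G,Y)$ is a bijection for every $G\in\Ho\ck_\lambda$. Thus I must show that this condition forces $f$ to be an isomorphism, which is the classical statement that a set of compact generators closed under shifts detects isomorphisms in a triangulated category.

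First I would choose the regular cardinal $\lambda$ large enough that, besides $\ck$ being strongly $\lambda$-combinatorial, the subcategory $\Ho\ck_\lambda$ is closed under the suspension $\Sigma$ and the loop functor $\Omega$ of the triangulated category $\Ho\ck$. Both functors are induced by endofunctors of $\ck$ assembled from the $\lambda$-accessible factorization functors, the suspension through a homotopy pushout and the loop through a homotopy pullback. Because $\ck$ is stable, finite homotopy limits commute with $\lambda$-filtered colimits, so both induced endofunctors are $\lambda$-accessible; by \cite[Theorem~2.19]{AR} they then preserve $\lambda$-presentable objects for all sufficiently large $\lambda$, so that $\Sigma$ and $\Omega$ each carry $\Ho\ck_\lambda$ into itself. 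I expect arranging this closure under shifts to be the main obstacle, since it is the only place where stability is genuinely used and where $\lambda$ must be refined beyond the conditions of Theorem~\ref{strongly}; in particular the accessibility of $\Omega$, which is a homotopy limit rather than a homotopy colimit, rests essentially on stability.

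With such a $\lambda$ fixed, assume $E_\lambda f$ is an isomorphism and complete $f$ to a distinguished triangle
\[
\xymatrix{
X \ar[r]^-{f} & Y \ar[r] & Z \ar[r] & \Sigma X.
}
\]
Applying the cohomological functor $\Ho\ck(G,-)$ for $G\in\Ho\ck_\lambda$ yields a long exact sequence in which $\Ho\ck(G,f)$ is bijective by hypothesis and $\Ho\ck(G,\Sigma f)\cong\Ho\ck(\Omega G,f)$ is bijective because $\Omega G\in\Ho\ck_\lambda$. A short diagram chase then gives $\Ho\ck(G,Z)=0$, and as this holds for every $G\in\Ho\ck_\lambda$ the presheaf $E_\lambda Z$ is terminal. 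By weak generation (Theorem~\ref{well}) it follows that $Z=0$, and a distinguished triangle whose third term vanishes has $f$ as an isomorphism. Hence $E_\lambda$ reflects isomorphisms, and since $\lambda$ may be taken arbitrarily large the proposition follows.
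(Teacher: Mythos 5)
The paper does not actually prove this proposition; it defers to \cite[Proposition~6.4]{R2}, and your argument is essentially the one given there: enlarge $\lambda$ so that $\Ho\ck_\lambda$ is closed (up to isomorphism) under $\Sigma$ and $\Omega$, complete $f$ to a distinguished triangle, and use the long exact sequence together with weak generation to show the cofibre vanishes. Your proof is correct, including the identification of the closure under shifts as the only point requiring a further refinement of $\lambda$ beyond Theorem~\ref{strongly}. One small correction: the $\lambda$-accessibility of the point-set loop functor does not really rest on stability --- it already follows from the $\lambda$-accessible factorizations together with the fact that $\lambda$-small limits commute with $\lambda$-filtered colimits in any locally $\lambda$-presentable category; stability is what you need for $\Ho\ck$ to be triangulated and for $\Omega$ to be inverse to $\Sigma$, so that $\Ho\ck(G,\Sigma f)\cong\Ho\ck(\Omega G,f)$ and the two-out-of-three step in the exact sequence applies.
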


\begin{rem}
\label{min}
{\rm
If $E_\lambda$ is full and reflects isomorphisms then each object of $\Ho\ck$ is a minimal weak colimit of its canonical diagram with respect to $\Ho\ck_\lambda$.
}
\end{rem}

One could ask if every combinatorial stable model category is $\lambda$-Brown for arbitrarily large regular cardinals $\lambda$, as discussed in \cite{R2} and \cite{R3}. 
This fact would have important consequences \cite{N2}, but it is unfortunately not true. The first counter\-example was given in~\cite{BG}, and in~\cite{BS} a large class was found of combinatorial stable model categories which are not $\lambda$-Brown for any~$\lambda$.
An obstruction theory for generalized Brown representability in triangulated categories was developed in \cite{MR}, with special focus on derived categories of rings.

\end{document}